\definecolor{light-blue}{rgb}{0.8,0.85,1}
\definecolor{light-red}{rgb}{1,.4,.4}
\definecolor{purp}{rgb}{.7,.3,1}
\definecolor{yel}{rgb}{1,1,.5}
\definecolor{cy}{rgb}{0,1,1}
\newtheorem{theorem}{Theorem}
\newtheorem{proposition}[theorem]{Proposition}
\theoremstyle{definition}
\newtheorem{definition}[theorem]{Definition}
\newtheorem{remark}[theorem]{Remark}
\newcommand{\co}{\colon\,}
\newcommand{\bA}{\mathbb A}
\newcommand{\bR}{\mathbb R}
\newcommand{\bC}{\mathbb C}
\newcommand{\bE}{\mathbb E}
\newcommand{\bH}{\mathbb H}
\newcommand{\bP}{\mathbb P}
\newcommand{\cM}{\mathcal M}
\newcommand{\cP}{\mathcal P}
\newcommand{\cS}{\mathcal S}
\renewcommand\Im{\operatorname{Im}}
\title[Conics in hyperbolic geometry]{Different definitions of conic
  sections\\ in hyperbolic geometry}
\author{Patrick Chao}
\address{Montgomery Blair High School\\
51 University Blvd E\\
Silver Spring, MD 20901-2451, USA}
\curraddr{University of California, Berkeley\\
Berkeley, CA 94720}
\email[Patrick Chao]{prc@berkeley.edu}
\author{Jonathan Rosenberg}
\address{Department of Mathematics\\
University of Maryland\\
College Park, MD 20742-4015, USA} 
\email[Jonathan Rosenberg]{jmr@math.umd.edu}
\thanks{JR partially supported by NSF grants DMS-1206159, DMS-1607162.  This
  paper is based on a summer research project by PC under the
  supervision of JR as part of the Montgomery Blair High School summer
research internship program in 2015.}  
\begin{document}
\begin{abstract}
In classical Euclidean geometry, there are several equivalent
definitions of conic sections. We show that in the hyperbolic plane,
the analogues of these same definitions still make sense, but are no
longer equivalent, and we discuss the relationships among them.  
\end{abstract}
\keywords{conic section, hyperbolic plane, focus, directrix}
\subjclass[2010]{Primary 51M10.  Secondary 53A35, 51N15.}

\maketitle

\section{Introduction}
\label{sec:intro}
Throughout this paper, $\bE^n$ will denote Euclidean $n$-space and
$\bH^n$ will denote hyperbolic $n$-space. Recall that (up to isometry)
these are the unique complete simply connected
Riemannian $n$-manifolds with constant
curvature $0$ and $-1$, respectively. We will use $d(x,y)$ for the
Riemannian distance between points $x$ and $y$ in either of these
geometries. We will sometimes identify
$\bE^n$ with affine $n$-space $\bA^n(\bR)$ over the reals, which can
then be embedded as usual in projective $n$-space $\bP^n(\bR)$ (the
set of lines through the origin in $\bA^{n+1}(\bR)$).
While this paper is about $2$-dimensional geometry, we will sometimes
need to consider the case $n=3$ as well as $n=2$.

\subsection{Hyperbolic geometry}
\label{sec:hyperbolic}
\emph{Hyperbolic geometry} is a form of non-Euclidean geometry, which
modifies Euclid's fifth axiom, the parallel postulate. The parallel
postulate has an equivalent statement, known as \emph{Playfair's axiom}. 

\begin{definition}[Playfair's axiom]
\label{def:Playfair}
Given a line $l$ and a point $p$ not on $l$, there exists only one
line through $p$ parallel to $l$. 
\end{definition}

In hyperbolic geometry, this is modified by allowing an infinite
number of lines through $p$ parallel to $l$. This has interesting
effects, resulting in the angles in a triangle adding up to less than
$\pi$ radians, and a relation between the area of the triangle
and the \emph{angular defect}, the difference between $\pi$
and the sum of the angles. Hyperbolic
geometry may also be considered to be the Riemannian geometry
of a surface
of constant negative curvature. When this curvature is normalized to
$-1$, there are especially nice formulas, such as the fact that the
area of a triangle is equal to the angular defect. 

However, since a surface of negative curvature cannot be embedded in a
surface of zero curvature, hyperbolic geometry requires ``models'' to
represent hyperbolic space on a flat sheet of paper. There are many
such models, including the Poincar\'e disk and the Poincar\'e upper half-plane
model. These all have varying metrics and methods of representing
lines (i.e., geodesics)  and shapes. 

In the \emph{Poincar\'e disk model} of $\bH^2$, $\{z\in \bR^2\mid \vert z\vert
  <1\}$, geodesics are either circular arcs orthogonal to the unit
circle or else lines through the origin. The metric is defined as: 
\[
(ds)^2=\frac{(dx)^2+(dy)^2}{(1-x^2-y^2)^2}.
\]

In the \emph{upper half-plane model} of $\bH^2$,
$\{(x,y)\in \bR^2\mid   y>0\}$,
geodesics are either lines orthogonal to the $x$-axis or
else circular arcs orthogonal to the $x$-axis. The metric is defined as: 
\[
(ds)^2=\frac{(dx)^2+(dy)^2}{y^2}.
\]

In the \emph{Klein disk model} or \emph{Beltrami-Klein model} of $\bH^2$,
the points in the model are the points of the open unit disk in the
Euclidean plane, and the geodesics are the intersection with the open disk
of chords joining two points on the unit circle.  The formula for
the metric in this model is rather complicated:
\[
(ds)^2=\frac{(dx)^2+(dy)^2}{1-x^2-y^2}+\frac{(x\,dx+y\,dy)^2}{(1-x^2-y^2)^2}.
\]

\subsection{Conics}
\label{sec:conics}
\begin{definition}
  \label{def:Euclconics}
One of the oldest notions in
geometry, going all the way back to Apollonius, is that of \emph{conic
  sections} in $\bE^2$. There are at least four equivalent
definitions of a conic section $C$:
\begin{enumerate}
\item a smooth irreducible algebraic curve in $\bA^2(\bR)$ of degree $2$;
\item the intersection of a right circular
  cone in $\bE^3$ (with vertex at the origin, say)
  with a plane not passing through the origin, this plane in turn
  identified with $\bE^2$;
\item the \emph{two focus definition}: Fix two points $a_1, a_2\in
  \bE^2$. An \emph{ellipse} $C$ is the locus of points $x\in \bE^2$ such that
  $d(x,a_1) + d(x,a_2) = c$, where $c>0$ is a fixed
  constant. Similarly, a \emph{hyperbola} $C$ is the locus of points
  $x\in \bE^2$ such that
  $\left\vert d(x,a_1) - d(x,a_2)\right\vert =   c$,
  where $c>0$ is a fixed constant. The points $a_1$ and $a_2$ are
  called the \emph{foci} of the conic, and the line joining
  them (assuming $a_1\ne a_2$) is called the \emph{major axis}.   A
  \emph{circle} is the 
  special case of an ellipse where $a_1=a_2$. A \emph{parabola} is the
  limiting case of a one-parameter family of ellipses $C_t(a_1,a_2,c_t)$ where
  $a_1$ is fixed and we let $a_2$ run off to infinity along the major
  axis keeping $c_t-d(a_1,a_2)$ fixed.
\item the \emph{focus/directrix definition}: Fix a point $a_1\in
  \bE^2$, called the \emph{focus}, and a line $\ell$ not passing
  through $a_1$, called the \emph{directrix}. A \emph{conic} $C$ is
  the locus of points with $d(x,a_1)=\varepsilon d(x, \ell)$, where
  $\varepsilon >0$ is a constant called the \emph{eccentricity}. If
  $\varepsilon <1$ the conic is 
  called an \emph{ellipse}; if $\varepsilon=1$ the conic is
  called a \emph{parabola}; if $\varepsilon>1$ the conic is
  called a \emph{hyperbola}.  A \emph{circle} is the limiting case of an
  ellipse obtained by fixing $a_1$ and sending $\varepsilon\to 0$ and $d(a_1,
  \ell)\to \infty$ while keeping $r= \varepsilon d(a_1, \ell)$ fixed.
\end{enumerate}
Note that these definitions come from totally different realms.
Definition
\ref{def:Euclconics}(1) is from algebraic geometry. Definition
\ref{def:Euclconics}(2) uses a
totally geodesic embedding of $\bE^2$ into $\bE^3$.  Definitions
\ref{def:Euclconics}(3)
and \ref{def:Euclconics}(4) use only the metric geometry of $\bE^2$.

Since Definition \ref{def:Euclconics}(1) is phrased in terms of
algebraic geometry, it 
naturally leads to a definition of a \emph{conic in $\bP^2(\bR)$} as a smooth
irreducible algebraic curve of degree $2$. Such a curve must be given
(in homogeneous coordinates) by a homogeneous quadratic equation
$Q(x)=0$, where $Q$ is a nondegenerate indefinite quadratic form on
$\bR^3$. This is the equation of a cone, and intersecting the cone
with an affine plane not passing through the origin (the vertex of the
cone) gives us back Definition \ref{def:Euclconics}(2). 
\end{definition}

\subsection{Contents of this paper}
\label{sec:contents}
The topic of this paper is studying what happens to Definitions
\ref{def:Euclconics}(1)--(4) when we replace $\bE^2$ by $\bH^2$.
This is an old problem, and is discussed for example in
\cite{MR1505334,MR1628013,MR0095442,MR0171205,MR487028}.  However, as
we will demonstrate, the analogues of Definitions
\ref{def:Euclconics}(1)--(4) are 
no longer equivalent in $\bH^2$.  Thus there is some confusion in the
literature, and those who talk about conic sections in $\bH^2$ (as
recently as \cite{MR3274526,2015arXiv150406450}) do not
always all mean the same thing.  Our main results are
Theorems \ref{thm:twocircles}, \ref{thm:twoellipses},
\ref{thm:parabolas}, and \ref{thm:twohyps} in Section
\ref{sec:results}, which clarify the relationships among these
definitions (especially the two-focus and focus-directrix definitions)
in $\bH^2$.  Table \ref{table} summarizes our results.
\begin{table}[hbt]
\begin{center}
\begin{tabular}{|l|}
\hline
\multicolumn{1}{|c|}{\textbf{Circles}}\\
\ref{def:circ} $\Leftrightarrow$ \ref{def:twofocushypconic}. \ref{def:circ},
\ref{def:twofocushypconic} included in \ref{def:hypconicinH3},
\ref{def:Molnar}. \ref{def:circ},
\ref{def:twofocushypconic} $\not\Leftrightarrow$ 
\ref{def:focdirhypconic}.\\
\hline
\multicolumn{1}{|c|}{\textbf{Horocycles (paracycles), hypercycles}}\\
included in \ref{def:alghypconic}, \ref{def:Molnar}.
not included in \ref{def:twofocushypconic},
\ref{def:focdirhypconic}.\\ 
\hline
\multicolumn{1}{|c|}{\textbf{Ellipses}}\\
\ref{def:twofocushypconic} $\not\Leftrightarrow$ 
\ref{def:focdirhypconic}. But when \ref{def:focdirhypconic} gives a
closed curve, it is included in \ref{def:twofocushypconic}.\\
\hline
\multicolumn{1}{|c|}{\textbf{Hyperbolas}}\\
\ref{def:twofocushypconic} $\subsetneqq$ 
\ref{def:focdirhypconic}.\\
\hline
\multicolumn{1}{|c|}{\textbf{Parabolas}}\\
\ref{def:twofocushypconic} $\not\Leftrightarrow$ 
\ref{def:focdirhypconic}. Neither kind of parabola is ever closed.\\
\hline
\end{tabular}
\medskip
\caption{Relationships among possible definitions of conics in
  $\bH^2$. Numbers refer to the numbered definitions in sections
  \ref{sec:axioms} and \ref{sec:results}.}
\label{table}
\end{center}
\end{table}

\section{Other Axiomatizations}
\label{sec:axioms}

Before discussing conics in $\bH^2$, we first explain still another
definition of conic sections in $\bP^2(\bR)$, which is the definition
found in \cite[Ch.\ III]{MR1628013} and with a slight variation in
\cite{MR1505334}. This definition uses the notion of a \emph{polarity}
$p$ in $\bP^2(\bR)$. This is a particular type of
mapping of points to lines and lines to
points preserving the incidence relations of
projective geometry (or in the language of \cite[\S3.1]{MR1628013}, a
\emph{correlation}).  It can be explained in 
terms of algebraic geometry as follows. If $Q$ is a nondegenerate
quadratic form on $\bR^3$, then there is an associated nondegenerate
symmetric bilinear form defined by $B(x,y)= (Q(x+y)-Q(x)-Q(y))/2$, and
if $V$ if a linear subspace of $\bR^3$ of dimension $d=1$ or $2$, then
the orthogonal complement $V^{\perp, B}$ of $V$ with respect to $B$ is a linear
subspace of dimension $3-d$.  Thus the process $p$ of taking orthogonal
complements with respect to $B$ sends points in $\bP^2(\bR)$, which
are $1$-dimensional linear subspaces of $\bR^3$, to lines (copies of
$\bP^1(\bR)$), which are $2$-dimensional linear subspaces of $\bR^3$,
and \emph{vice versa}. Given a polarity $p$, the associated
\emph{conic} is the set $C$ of points $x\in \bP^2(\bR)$ such that $x$
lies on the line $p(x)$, i.e., the set of $1$-dimensional linear
subspaces $V$ of $\bR^3$ for which $V\subset V^{\perp, B}$, or in
other words, for which $V$ is $B$-isotropic.  Thus if
we identify the point $x\in \bP^2(\bR)$ with its homogeneous
coordinates, or with a basis vector for $V$ up to rescaling, this
becomes the condition $B(x,x)=0$, or $Q(x)=0$, which is just
Definition \ref{def:Euclconics}(1).  (Note that if $Q$ is definite,
the conic is empty, so we are forced to take $Q$ to be indefinite in
order to get anything interesting.)  Conversely, it is well known 
\cite[\S4.72]{MR1628013} that
every polarity arises from a nonsingular symmetric matrix or
equivalently from a nondegenerate quadratic form $Q$, so the polarity
definition of conics in \cite[Ch.\ III]{MR1628013} is equivalent to 
Definition \ref{def:Euclconics}(1). 

We now introduce several possible definitions of conic sections in
$\bH^2$.
\begin{definition}[A metric circle]
\label{def:circ}
A \emph{circle} in $\bH^2$ is the locus of points a fixed distance
$r>0$ from a \emph{center} $x_1\in \bH^2$, i.e., 
$C=\{x\in \bH^2: d(x,x_1)=r\}$. 
\end{definition}
\begin{definition}[{Analogue of Definition \ref{def:Euclconics}(2)}]
\label{def:hypconicinH3}
A \emph{right circular cone} in $\bH^3$ is defined as follows.  Fix a point
$x_0\in \bH^3$ (say the origin, if we are using the standard unit ball in
$\bR^3$ as our model of $\bH^3$) and fix a plane $P$ in $\bH^3$
(a totally geodesic copy of $\bH^2$) not passing through $x_0$.  
There is a unique ray starting at $x_0$ and intersecting $P$
perpendicularly. Let $x_1$ be the intersection point (the closest
point on $P$ to $x_0$), and fix a radius $r>0$. We then have the circle
$C$ in $P$ centered at $x_1$ with radius $r$.  The cone $c(x_0, C)$
through $x_0$ and $C$ is then the union of
the lines (geodesics) through $x_0$ passing through a point of $C$.
The point $x_0$ is called the \emph{vertex} of the cone.
A \emph{conic section} (in the literal sense!)
in $\bH^2$ is then the intersection of a plane $P'$ in $\bH^3$ (not
passing through $x_0$) with $c(x_0, C)$.
\end{definition}
Since we can take $P'=P$ in the above definition, it is obvious that a
circle (as in Definition \ref{def:circ}) is a special case of a conic
section in the sense of Definition \ref{def:hypconicinH3}.

In the Poincar\'e ball 
model of $\bH^3$ with $x_0$ the origin, geodesics through
$x_0$ are just straight lines for the Euclidean metric, so it's easy
to see that a right circular cone with vertex $x_0$ is also a right
circular cone in the Euclidean sense in $\bR^3$.  On the other hand,
planes in $\bH^3$ not passing through $x_0$ correspond to Euclidean
spheres perpendicular to the unit sphere (the boundary of the model of
$\bH^3$).  Thus a conic section in the sense of Definition
\ref{def:hypconicinH3} is the intersection of a right circular cone
with a sphere, and is thus (in terms of the algebraic geometry of
$\bA^3(\bR)$) an algebraic curve of degree $\le 4$. To view this conic
in the usual Poincar\'e disk model of $\bH^2$, we apply an 
isometry (stereographic projection) from $P$ to the unit disk in
$\bC$. Since this is a rational map, we see that any conic section in
the sense of Definition \ref{def:hypconicinH3} is an algebraic curve
(in fact of degree $\le 4$) when viewed in the disk model of $\bH^2$.
Alternatively, if we use the Klein ball model of $\bH^3$ with $x_0$ the
origin, then a right circular cone with vertex $x_0$ will again look
like a Euclidean right circular cone, while a $2$-plane in $\bH^3$
will be the intersection of the ball with 
a Euclidean $2$-plane, and any conic section in the
sense of Definition \ref{def:hypconicinH3} will also be a conic
section in the Euclidean sense of Definition \ref{def:Euclconics}.
Thus Definition \ref{def:hypconicinH3} is equivalent to the following
Definition \ref{def:alghypconic}.
\begin{definition}[{Analogue of Definition \ref{def:Euclconics}(1)}]
A \emph{conic in $\bH^2$ in the algebraic sense} is the intersection of a
smooth irreducible algebraic curve of degree $2$
in $\bA^2(\bR)$ with the open unit
disk, viewed as the Klein disk model for $\bH^2$. (This is a
non-conformal model in which points of $\bH^2$ are points of the open
unit disk, and the straight lines are intersections with
the open disk of straight lines in the plane.) Such a conic is
closed (compact) if and only if it is a circle or ellipse not
intersecting the unit circle (the \emph{absolute} in the terminology
of \cite{MR1505334} and \cite{MR1628013}).
\label{def:alghypconic}
\end{definition}

Definition \ref{def:alghypconic} is the definition of conics used in
\cite{MR1505334} and \cite{MR1628013}.

Still another approach to defining conics may be found in \cite{MR487028},
based on the axiom system for $\bE^2$ and $\bH^2$ developed in
\cite{MR0346643}. First we need to discuss Bachmann's approach to
metric geometry. Bachmann observes that in either $\bE^2$ and $\bH^2$, there
is a unique isometry which is reflection in a given line $a$ or around
a given point $A$.  Thus we can identify lines and points with certain
distinguished involutory elements $\cS$ (the reflections in lines)
and $\cP$  (the reflections around points) of the isometry group
$G$.  More is true: every element of $G$ is a product of at most three
elements of $\cS$.  Elements of $\cS$ are orientation-reversing; elements
of $\cP$ are orientation-preserving. The product of two elements
$a,\,b\in \cS$ is a non-trivial involution if and only if $a\ne b$ and
$ab=ba$; in this case, the lines associated to $a$ and $b$ are
perpendicular (we write $a\perp b$)
and $ab\in \cP$ is the reflection around the unique intersection point
of $a$ and $b$. Furthermore, every element of $G$ of order $2$ belongs
to $\cS$ or to $\cP$, but not to both.  A point $A\in \cP$ lies on a
line $a\in \cS$ exactly when there exists $b\in \cS$ commuting with
$a$ such that $A = ab$.  Thus a \emph{metric plane} $\cM$
can be identified with a
group $G$ together with a distinguished generating set $\cS$
consisting of involutions and the set $\cP$ of non-trivial products of
commuting elements of $\cS$, satisfying certain axioms.
We won't need the axioms here, since they will be evident in the
cases we are interested in. In the case of $\bE^2$, $G=\bR^2\rtimes
O(2)$, the usual Euclidean 
motion group, and in the case of $\bH^2$, $G=PGL(2,\bR)\cong
O^+(2,1)$. (If $\begin{pmatrix} a&b\\c&d\end{pmatrix}$ has determinant
$+1$, then it operates on the upper half-plane by linear fractional
transformations, which are orientation-preserving, and if it has
determinant $-1$, then it operates on the upper half-plane by
\[
z\mapsto \frac{a\bar z + b}{c\bar z + d},
\]
and this conjugate-linear map is an orientation-reversing isometry of
$\bH^2$.) 

Bachmann also points out that the metric plane $(\cM,
\cS, \cP)$ corresponding to $\bE^2$ or $\bH^2$ can be embedded naturally in a
\emph{projective-metric plane} $(\cP\cM, \cS', \cP')$, in such a way
that $\cS\subseteq \cS'$ and $\cP\subseteq \cP'$.
In the case of $\bE^2$, this is just the usual embedding of
$\bA^2(\bR)$ in $\bP^2(\bR)$ by adjoining a copy of $\bP^1(\bR)$ at
$\infty$, and the associated group is $PGL(3,\bR)$.  In the case of
$\bH^2$, $\cP\cM$ is again a copy of $\bP^2(\bR)$, but its points 
and lines consist of \emph{ideal points} and \emph{ideal lines} of
$\bH^2$. A simple way to visualize the embedding of $\bH^2$ in
$\bP^2(\bR)$ is to use the (non-conformal) Klein model of $\bH^2$, in
which points are points in the interior of the unit disk in $\bR^2$,
and lines are the intersections of ordinary straight lines in
$\bA^2(\bR)$ with the unit disk.  Then each point or line of $\bH^2$
obviously corresponds to a unique point or line of $\bP^2(\bR)$.
When viewed as $\cP\cM$ in this way, $\bP^2(\bR)$ carries a canonical
polarity, namely the one associated to the unit circle in
$\bA^2(\bR)$, viewed as a conic in the sense of the polarity
definition at the beginning of this Section.  When we embed
$\bA^2(\bR)$ in $\bP^2(\bR)$ as usual via $(x,y)\mapsto [x,y,1]$
(homogeneous coordinates denoted by square brackets), this polarity is
associated to the quadratic form $Q\co (x,y,z)\mapsto x^2+y^2-z^2$, since
$Q(\cos\theta,\sin\theta,1)=0$ for any real angle $\theta$.
\begin{definition}[{\cite[Definition 4.1]{MR487028}}]
\label{def:Molnar}
A \emph{conic $C$ in the sense of Moln\'ar}, with foci $A,\,B\in
\bP^2(\bR)$, is defined by choosing a line $x_1$ in $\bP^2(\bR)$ which is not a
boundary line (i.e., $x_1$ is not tangent to the unit circle) and not
passing through either $A$ or $B$, and with $A$ and $B$ not each
other's reflections across $x_1$.  Then $C$ consists of points $X_{11}$
and $X$ chosen as follows.  $X_{11}$ is the intersection of the lines
$a_{11}$ through $A$ and $B^{x_1}$ (the reflection of $B$ across
$x_1$) and $b_{11}$ through $B$ and $A^{x_1}$. (The line $x_1$ is
chosen so that $a_{11}$ and $b_{11}$ are not boundary lines.) The other
points $X$ are defined by fixing a point $Y$ on $x_1$ and
taking the lines $a$ through $Y$ and $A$ and $b$ though $Y$ and $B$,
and then if neither $a$ nor $b$ is a boundary line, letting $X$ be the
intersection of $a_{11}^a$ and $b_{11}^b$ (the reflections of $a_{11}$
and $b_{11}$ across $a$ and $b$, respectively).  Appropriate
modifications are made if $a$ or $b$ is a boundary line.
\end{definition}
As is quite evident, Moln\'ar's definition is quite complicated but
results in a conic section in $\bH^2$ being the intersection of a conic in
$\bP^2(\bR)$ with the unit disk (in the Klein model). We will not
consider this definition further, but it's closely related to
Definition \ref{def:alghypconic}. A picture of the construction with
$A=(0,0)$, $B=(.5,0)$, $x_1=\{y=.5\}$ is shown in Figure
\ref{fig:MolnarConic}. 
\begin{figure}[hbt]
  \begin{center}
    \includegraphics[width=2in]{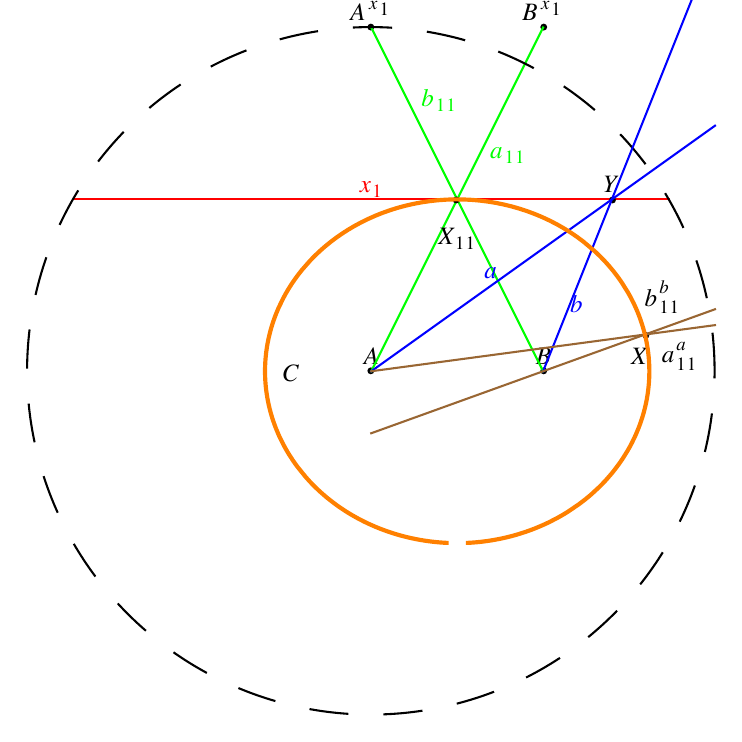}
    \end{center}
\caption{Moln\'ar's construction of a conic}
\label{fig:MolnarConic}
\end{figure}

\section{Main Results}
\label{sec:results}

\begin{definition}[{Analogue of Definition \ref{def:Euclconics}(3)}]
\label{def:twofocushypconic}
The definition of \emph{two focus conics} in Definition
\ref{def:Euclconics}(3) immediately goes over to $\bH^2$, simply by
replacing the Euclidean distance by the hyperbolic distance.  Note
that the case of a circle was already mentioned in
Definition \ref{def:circ}.
\end{definition}
The last definition is the only one that is not immediately
obvious. However, if were we to carry Definition \ref{def:Euclconics}(4)
over to $\bH^2$ without change, then since in the upper half-plane
or disk models of $\bH^2$, the distance function is the log of an
algebraic expression, in the case of irrational eccentricity $\varepsilon$ we
would effectively get the equation 
\[
(\text{algebraic expression}) = (\text{algebraic expression})^\varepsilon,
\]
which is a transcendental equation, and could not possibly agree with
the other definitions of conic sections.  This explains the
modification made in \cite{MR1505334}.  The use of the hyperbolic sine
comes from its role in hyperbolic geometry via the solution of the
Jacobi equation.
\begin{definition}
\label{def:focdirhypconic}[{Analogue of Definition
    \ref{def:Euclconics}(4)}]
Fix a point $a_1\in \bH^2$, called the \emph{focus}, and a line 
(geodesic) $\ell$ not passing   through $a_1$, called the
\emph{directrix}. A \emph{conic} $C$ is the locus of 
  points $x\in \bH^2$ with $\sinh d(x,a_1)=\varepsilon \sinh d(x,
  \ell)$, where $\varepsilon>0$  
  is a constant called the \emph{eccentricity}. If $\varepsilon<1$ the conic is
  called an \emph{ellipse}; if $\varepsilon=1$ the conic is
  called a \emph{parabola}; if $\varepsilon>1$ the conic is
  called a \emph{hyperbola}. (Note: in the case of the parabola, but
  only in this case, the hyperbolic sines cancel and can be removed
  from the definition.) A \emph{circle} is the limiting case of an
  ellipse obtained by fixing $a_1$ and sending $\varepsilon\to 0$ and $d(a_1,
  \ell)\to \infty$ while keeping $r=\varepsilon \sinh d(a_1, \ell)$ fixed.
\end{definition}

\subsection{Circles}
\label{sec:circle}

We begin now to compare the various definitions.  We start with the
circle, which is the most straightforward.  Definition \ref{def:circ}
clearly coincides with Definition \ref{def:hypconicinH3}, in the sense
that if we intersect a right circular cone with a plane perpendicular
to the axis, the result is a circle in the sense of
Definition \ref{def:circ}. We also have the following.
\begin{proposition}
\label{prop:twocircles}
Definition \ref{def:circ} coincides with the case of circles
in Definition \ref{def:alghypconic}, but with the Klein model replaced
by the Poincar\'e model.  In other words, an ordinary
circle in $\bA^2(\bR)$, contained in the open unit disk, 
when viewed as a curve in the Poincar\'e disk model of $\bH^2$
is a metric circle in $\bH^2$, and \emph{vice versa}. Similarly, an
ordinary circle contained in the upper half-plane, when viewed as a
curve in the Poincar\'e upper half-plane model of $\bH^2$, 
is a metric circle in $\bH^2$, and \emph{vice versa}.
\end{proposition}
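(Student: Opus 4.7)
The plan is to reduce to the case of circles centered at the origin of the Poincar\'e disk $D$, where the equivalence is nearly immediate, and then propagate the result by hyperbolic isometries. The Euclidean rotations about $0$ preserve the Poincar\'e metric $ds = 2|dz|/(1-|z|^2)$, hence are hyperbolic isometries. It follows that the hyperbolic distance $d(0,z)$ depends only on $|z|$, and is in fact a strictly increasing function of $|z|$. Therefore $\{z\in D : d(0,z)=r\}$ coincides with $\{z\in D : |z|=\rho\}$ for a suitable $\rho\in (0,1)$, establishing the equivalence for circles centered at $0$.

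For the forward direction in the general case, I will use the well known fact that every orientation-preserving isometry of $D$ in the Poincar\'e model is a M\"obius transformation of $\widehat{\bC}$ preserving $D$, and that such M\"obius transformations preserve the family of Euclidean circles and lines. Given a metric circle with center $x_1$ and radius $r$, pick an isometry $\phi$ of $D$ with $\phi(x_1)=0$. Then $\phi$ carries the given metric circle to a metric circle about $0$, which by the first step is a Euclidean circle centered at $0$. Applying $\phi^{-1}$, we obtain a Euclidean circle or line; since the image is compact inside $D$ it must be a Euclidean circle.

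For the reverse direction, given a Euclidean circle $C\subset D$, I will construct its hyperbolic center explicitly. Let $L$ be the diameter of $D$ through $0$ and the Euclidean center of $C$; then $L$ is both a hyperbolic geodesic and an axis of symmetry of $C$, meeting $C$ in two Euclidean-antipodal points $a,b$. Let $m$ be the hyperbolic midpoint of $a$ and $b$ on $L$, and choose a M\"obius transformation $\phi$ of $D$ with $\phi(m)=0$ and $\phi(L)=L$ (the latter can be arranged, since after mapping $m$ to $0$ one may compose with a Euclidean rotation about $0$, which is a hyperbolic isometry of $D$, to align the image of $L$ with $L$). The Euclidean reflection across $L$ is a hyperbolic isometry fixing $C$, and $\phi$ conjugates it to the Euclidean reflection across $L$ again, which therefore fixes $\phi(C)$; hence the Euclidean center of $\phi(C)$ lies on $L$. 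Since $d(\phi(a),0)=d(a,m)=d(b,m)=d(\phi(b),0)$ and $\phi(a),\phi(b)\in L$ with $\phi(a)\neq \phi(b)$, they are Euclidean-antipodal on $L$, forcing the Euclidean center of $\phi(C)$ to be $0$. The origin case now shows $\phi(C)$ is a metric circle about $0$, so $C$ is a metric circle about $m=\phi^{-1}(0)$.

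The upper half-plane version is then automatic: the Cayley transform between the disk and upper half-plane is a M\"obius transformation, hence a hyperbolic isometry that also bijects Euclidean circles-and-lines in the two models (mapping circles inside $D$ to circles inside the upper half-plane, since the Cayley map takes the boundary circle to the boundary line and preserves incidence). The main obstacle I anticipate is the symmetry argument in the reverse direction; in particular, verifying that $\phi$ can be taken to preserve $L$ and that the two reflections coincide requires a careful but standard check using that rotations about $0$ are disk isometries.
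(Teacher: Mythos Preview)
Your proof is correct and follows essentially the same approach as the paper: reduce to the case of circles centered at the origin (where the equivalence is immediate from the radial form of the metric), and then transport the result via the fact that hyperbolic isometries are M\"obius transformations and hence preserve Euclidean circles. The paper's version is terser---it simply asserts ``the general case follows'' from these two ingredients---whereas you have spelled out the reverse direction (Euclidean circle $\Rightarrow$ metric circle) more carefully by explicitly locating the hyperbolic center; this extra care is sound but not a genuinely different route.
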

\begin{proof}
First consider the disk model.  
If the center is the origin, this is clear since the hyperbolic
distance from $0$ to $z$ in $\{z: |z|<1\}$ 
in $\bC$ is a (nonlinear) function $\tanh^{-1}(|z|)=
\frac{1}{2}\log\bigl(\frac{1+|z|}{1-|z|}\bigr)$ of the Euclidean distance
$|z|$ from $0$ to $z$, so that each Euclidean circle centered at $0$ is
also a hyperbolic circle (of a different radius), and \emph{vice
versa}.  However, any circle in $\bH^2$ can be mapped to a
circle centered at $0$ via an isometry of $\bH^2$, and since linear
fractional transformations send circles to circles
\cite[Ch.\ 3, \S3.2, Theorem 14]{MR510197}, the general case
follows. The case of the half-plane model also follows since there is a linear
fractional transformation relating this model to the disk model.
\end{proof}
\begin{remark}
\label{rem:circcenter}
However, one should note that the center of a circle in the unit disk
or the upper half-plane may differ, depending on whether one considers
it as a Euclidean circle or a metric circle in $\bH^2$.  For example,
the metric circle in $\bH^2$ (in the upper-halfplane model) around the
point $i$ with hyperbolic metric radius $\log 2$ has Euclidean equation 
\[
\frac{\vert z - i\vert}{\vert z + i\vert} = \tanh\left(\frac12 \,\log 2\right)
= \frac{1}{3} \text{ or }
\left\vert z - \frac{5}{4}i\right\vert = \frac{3}{4},
\]
so its center as a Euclidean circle is $\frac{5}{4}\, i$.

Metric circles in $\bH^2$, when drawn in the Klein disk model, 
only appear to be circles when centered at the origin. Otherwise, they
are ellipses.
\end{remark}
However, the focus/directrix definition of circles is quite different.
\begin{theorem}
\label{thm:twocircles}
The definition of circle in Definition \ref{def:focdirhypconic} does
\emph{not} agree with the definition of circle in Definitions
\ref{def:circ}, \ref{def:hypconicinH3},
\ref{def:alghypconic}, and \ref{def:twofocushypconic}.
\end{theorem}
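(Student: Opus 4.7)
The plan is to derive an explicit Cartesian equation for the limit curve in Definition~\ref{def:focdirhypconic}, working in the upper-half-plane model of $\bH^2$, and then to show directly that this equation cannot coincide with that of any Euclidean circle; Proposition~\ref{prop:twocircles} then finishes the job.

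First I would normalize coordinates. Using an isometry of $\bH^2$, I can take $a_1 = i$ and the directrix to be the vertical geodesic $\ell_t = \{\Re z = -t\}$ with $t > 0$. In these coordinates $\sinh d(a_1, \ell_t) = t$, so the defining parameter is $\varepsilon = r/t$, and for $x = u + iv$ standard formulas give
\[
\sinh d(x, \ell_t) \;=\; \frac{u+t}{v}, \qquad \cosh d(x, i) \;=\; \frac{u^2 + v^2 + 1}{2v}.
\]
Letting $t \to \infty$ in $\sinh d(x, a_1) = \varepsilon \sinh d(x, \ell_t)$ with $x$ held fixed, the right-hand side tends to $r/v$, so the limit curve is cut out by $v \sinh d(x,i) = r$. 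Substituting $\sinh^2 d(x,i) = \cosh^2 d(x,i) - 1$ and clearing denominators then yields the explicit equation
\[
(u^2 + v^2 + 1)^2 \;=\; 4\bigl(v^2 + r^2\bigr).
\]

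Next I would compare this with a Euclidean circle. By Proposition~\ref{prop:twocircles}, every metric circle in $\bH^2$ becomes an ordinary Euclidean circle $u^2 + v^2 + Au + Bv + C = 0$ in the upper half-plane model. The limit curve is invariant under $u \mapsto -u$ (an isometry of $\bH^2$), so if it coincides with such a Euclidean circle, the circle must also be invariant, forcing $A = 0$. Solving each equation for $u^2$ as a function of $v$ gives, for the metric circle, $u^2 = -v^2 - Bv - C$ (polynomial in $v$), and for the limit curve, $u^2 = 2\sqrt{v^2+r^2} - v^2 - 1$. Equating these and squaring reduces the required identity to $(1 - C - Bv)^2 = 4(v^2 + r^2)$, and matching coefficients of $v^2,\,v,\,1$ forces $B^2 = 4,\;C = 1,\;r = 0$. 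Hence for $r > 0$ the limit curve is not a Euclidean circle, so not a metric circle, and the theorem follows.

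The step I expect to require most care is the passage to the limit $t \to \infty$, namely verifying that the pointwise limit of the defining equations genuinely cuts out the ``circle'' of Definition~\ref{def:focdirhypconic} as intended; once the explicit quartic equation $(u^2+v^2+1)^2 = 4(v^2+r^2)$ is in hand, the comparison with a Euclidean circle is a short algebraic computation. As a byproduct one sees that this focus/directrix ``circle'' is actually an algebraic curve of degree $4$ in the half-plane model (a Cassini-like oval), rather than of degree $2$, which gives a conceptual reason why it fails to be a metric circle.
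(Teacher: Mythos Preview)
Your argument is correct and follows the same overall strategy as the paper: normalize to the upper half-plane with focus $a_1=i$, compute the limiting focus/directrix equation explicitly, and then check that the resulting curve is not a Euclidean circle (hence, via Proposition~\ref{prop:twocircles}, not a metric circle).  The limiting equation you obtain, $(u^2+v^2+1)^2=4(v^2+r^2)$, is exactly the paper's $|z^2+1|=2r$ after squaring.

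There are two implementation differences worth noting.  First, the paper sends the directrix to infinity along the family of semicircular geodesics $|z|=R$, whereas you use the vertical geodesics $\Re z=-t$; your choice makes the distance-to-directrix formula $\sinh d(x,\ell_t)=(u+t)/v$ essentially trivial and shortens the limit computation considerably.  Second, for the ``not a circle'' step the paper argues by exhibiting a specific $r$ and comparing numerically with a tangent metric circle, while you give a clean algebraic proof valid for all $r>0$: matching $(1-C-Bv)^2=4(v^2+r^2)$ forces $r=0$.  Your version is more self-contained in this respect.  One small point you flagged yourself is worth making explicit in a final write-up: the limit in Definition~\ref{def:focdirhypconic} is only specified up to the direction in which $\ell$ recedes, and your vertical-line family and the paper's semicircle family both collapse to the same ideal boundary point, which is why the two computations yield the identical curve.
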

\begin{proof}
Consider a circle in the sense of Definition
\ref{def:focdirhypconic}.  Without loss of generality, we work in the
upper half-plane model of $\bH^2$ and set $a_1=i$, $\ell = \{z\in
\bH^2 : |z| = R\}$, where we let $R\to +\infty$.  In this case
$d(a_1,\ell) = \log R$ and we want to keep $r=\varepsilon \sinh(\log
R) = \frac{\varepsilon(R^2-1)}{2R}$ constant, so we take
$\varepsilon = \frac{2rR}{R^2-1}$.  For $z\in \bH^2$,
\[
d(z,\ell)=\tfrac12 \,d(z,w),\text{ where }w=\frac{R^2}{\overline z}
=\text{reflection of }z\text{ across }\ell.
\]
Then the equation $\sinh d(z,a_1)=\varepsilon \sinh d(z, \ell)$
becomes
\[
\sinh\left(2\tanh^{-1}\left\vert\frac{z-i}{z+i}\right\vert\right)
= \frac{2rR}{R^2-1}\sinh\left(
\tanh^{-1}\left\vert\frac{z-\frac{R^2}{\overline z}}{z-\frac{R^2}{z}}
\right\vert\right) .
\]
The left-hand side simplifies to
\[
\frac{2\vert z+i\vert\,\vert z-i\vert}{\vert z+i\vert^2 -
 \vert z-i\vert^2} = \frac{\vert z^2+1\vert}{2\Im z}.
\]
On the right-hand side,
\[
\left\vert\frac{z-\frac{R^2}{\overline z}}{z-\frac{R^2}{z}}
\right\vert = \frac{R^2 - \vert z\vert^2}{\left\vert R^2 - \vert z\vert^2 
\frac{z}{\overline z}\right\vert} = \frac{R^2-a}{\sqrt{R^4
+a^2 -2R^2a\cos(\theta)}}
\]
where $a= \vert z\vert^2$ and $\theta = 2\arg z$.  Then
\[
\lim_{R\to \infty}
\frac{2rR}{R^2-1}\sinh\left(\tanh^{-1}\left(\frac{R^2-a}{\sqrt{R^4
+a^2 -2R^2\cos(\theta)}}\right)\right) 
=\frac{\sqrt{2}r}{|z|\sqrt{1-\cos(\theta)}}.
\]
Thus Definition \ref{def:focdirhypconic} gives for our circle
the equation
\[
\frac{\vert z^2+1\vert}{2\Im
  z}=\frac{\sqrt{2}r}{|z|\sqrt{1-\cos(\theta)}}
= \frac{r}{|z|}\csc(\theta/2)= \frac{r}{|z|}\frac{|z|}{\Im z} =
\frac{r}{\Im z},
\]
or
\begin{equation}
  \vert z^2+1\vert = {2r}.
  \label{eq:focdircircle}
\end {equation}   
in the upper half-plane.  This is an algebraic curve but not a metric
circle. Figure \ref{fig:circ} shows the case of $r=.25$ (in solid
color) as drawn with
\texttt{Mathematica}. This curve passes through the points
$i\sqrt{\frac{3}{2}}$, $i\sqrt{\frac{1}{2}}$, and $i \pm 
\sqrt{\frac{\sqrt{17}-4}{2}}$; the circle centered on the imaginary axis
tangent to it at $i\sqrt{\frac{3}{2}}$ and $i\sqrt{\frac{1}{2}}$
is shown with a dashed line in the same figure. The curves are close
but do not coincide.
\end{proof}
\begin{figure}[hbt]
  \begin{center}
    \includegraphics[width=2in]{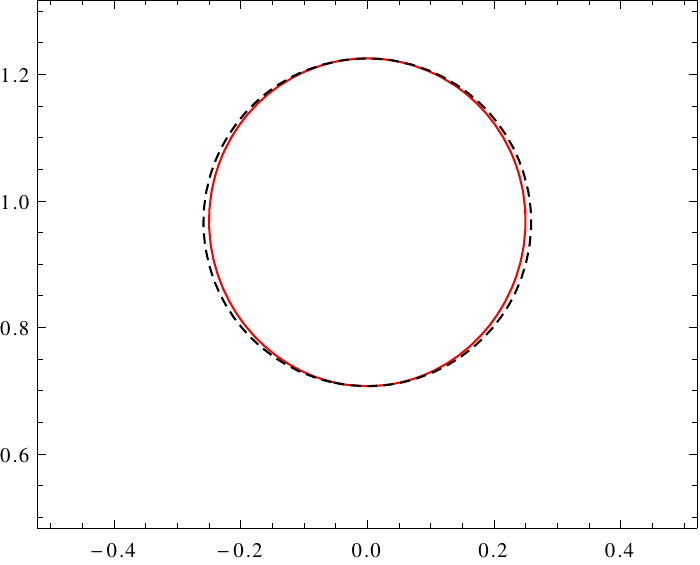}
    \end{center}
\caption{A ``circle''  with focus $i$ and $r=.25$ (solid color) and a
  tangent metric circle (dashed) in the upper half-plane}
\label{fig:circ}
\end{figure}  

Aside from circles, there are various other circle-like curves that
play a role in hyperbolic geometry.  These may be considered to be
conics according to certain definitions. Note also that they are
distinct from the circles of Definition \ref{def:focdirhypconic}.
\begin{definition}
\label{def:cycle}
A \emph{horocycle} (occasionally called a \emph{paracycle})
in the Poincar\'e disk model of $\bH^2$ is the
intersection of the disk with a circle tangent to the unit circle (and
lying inside the circle). A \emph{hypercycle} in the Poincar\'e disk
model of $\bH^2$ is the intersection of the disk with a circle meeting
the unit circle in exactly two points.  These have well-known
intrinsic definitions. A horocycle is the limit of a sequence
of circles $C_n$ (in the sense of Definition \ref{def:circ}) all
passing through a fixed point $x_0$, with
centers $x_n$ all lying on a fixed ray through $x_0$
and with radii $d(x_n,x_0)=r_n\to \infty$.  See Figure \ref{fig:horo}(a).
A hypercycle is a curve on one side of a given line $\ell$
whose points all have the same orthogonal distance from $\ell$.
See Figure \ref{fig:horo}(b).
Note that horocycles and hypercycles are clearly conics
in the sense of Definition \ref{def:alghypconic}. But they are not
covered by Definitions \ref{def:twofocushypconic} and
\ref{def:focdirhypconic}. Moln\'ar observes in \cite{MR487028} that
metric circles (Definition \ref{def:circ}), horocycles, and
hypercycles are all special cases of 
Definition \ref{def:Molnar} when the two foci coincide.
\end{definition}
  \begin{figure}[hbt]
  \begin{center}
    \includegraphics[width=2in]{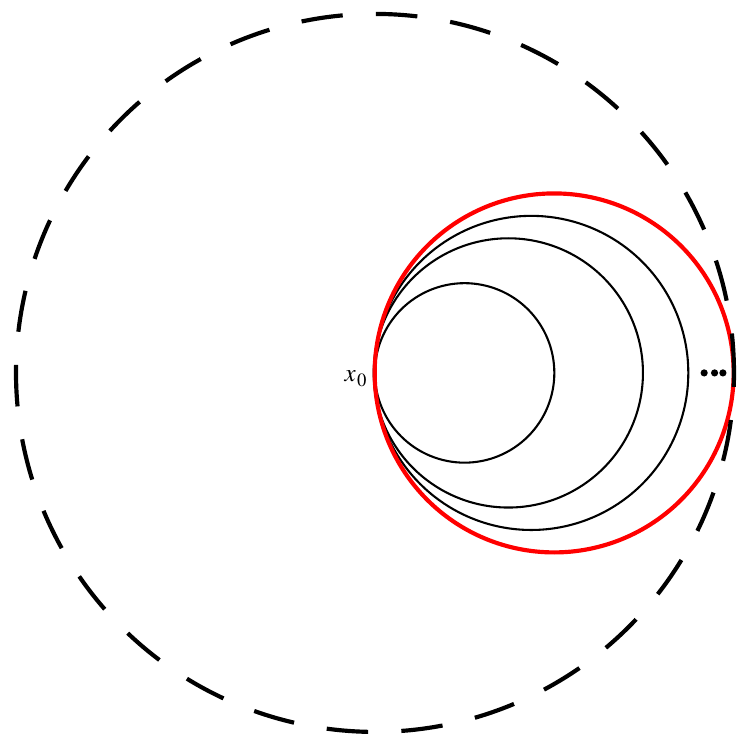}\hspace{1in}
    \includegraphics[width=2in]{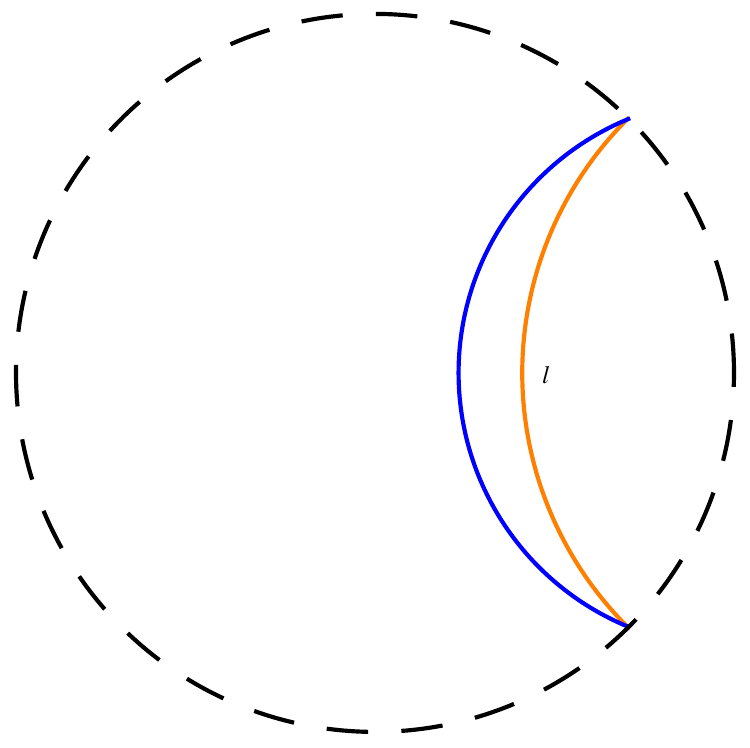}
    \end{center}
\caption{(a) (left) A horocycle (solid red) as a limit of circles (black)
  through $x_0$ with radii going to infinity. (b) (right) A hypercycle
  (solid blue) and a straight line $\ell$ (orange) 
  with the same ideal limits at infinity} 
\label{fig:horo}
\end{figure}

\subsection{Ellipses}
\label{sec:ellipse}

Next, we consider the case of the (noncircular) ellipse. There are two
main competing definitions: Definition \ref{def:twofocushypconic} and
Definition \ref{def:focdirhypconic}.
\begin{theorem}
\label{thm:twoellipses}
The definition of ellipse in Definition \ref{def:focdirhypconic} does
\emph{not always} agree with the definition of ellipse in Definition
\ref{def:twofocushypconic}.  However, there are cases where they
coincide. More precisely, when Definition \ref{def:focdirhypconic}
gives a closed curve in $\bH^2$, this curve is also a two-focus ellipse.
\end{theorem}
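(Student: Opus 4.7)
The plan is to work in the upper half-plane model of $\bH^2$, exploit the axial symmetry of Definition~\ref{def:focdirhypconic} across the perpendicular geodesic $m$ from $a_1$ to $\ell$, and reduce everything to polynomial identities. I would place $m$ along the imaginary axis, $a_1=i$, and $\ell=\{|z|=R\}$ for some $R>1$, so $d(a_1,\ell)=\log R$. The distance formulas from the proof of Theorem~\ref{thm:twocircles} convert Definition~\ref{def:focdirhypconic} into the algebraic equation
\[
R\,|z^2+1| \;=\; \varepsilon\,\bigl||z|^2-R^2\bigr|,
\]
whose restriction to the axis $z=it$ reads $\sinh|\log t|=\varepsilon\sinh|\log(t/R)|$. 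A case analysis shows that two positive solutions $t_-\in(0,1)$ and $t_+\in(1,R)$ exist — hence the curve has two vertices on $m$ and is compact — precisely when $\varepsilon<1/R$; for $\varepsilon\in[1/R,1)$ only one solution exists and the curve escapes to the ideal boundary. This immediately yields the non-equivalence half of the theorem: for $\varepsilon\in[1/R,1)$ the focus/directrix curve is unbounded, whereas every two-focus ellipse is compact (since $d(x,a_1)\le c$), so the two curves cannot agree.

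In the closed case $\varepsilon<1/R$, the axial symmetry forces any matching two-focus ellipse to have
\[
a_2 = i\,t_+t_-, \qquad c = \log(t_+/t_-),
\]
the reflection of $a_1$ across the hyperbolic midpoint $i\sqrt{t_+t_-}$ of the two vertices, and the hyperbolic distance $d(it_+,it_-)$ between them. The vertex identity $d(v,a_1)+d(v,a_2)=c$ is immediate for $v=it_\pm$. The main step is to verify this equality for every $z$ on the focus/directrix curve. Using the formulas $\cosh d(z,it_0)=(|z|^2+t_0^2)/(2t_0\Im z)$ and $\sinh d(z,it_0)=|z^2+t_0^2|/(2t_0\Im z)$ together with the addition formula $\cosh(d_1+d_2)=\cosh d_1\cosh d_2+\sinh d_1\sinh d_2$, the two-focus equation rewrites, after isolating the absolute-value product and squaring, as a polynomial of Euclidean degree $4$ in $x,y$. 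I would then verify that this polynomial factors as the product of the squared focus/directrix polynomial $R^2|z^2+1|^2-\varepsilon^2(|z|^2-R^2)^2$ with the spurious confocal-hyperbola factor produced by the squaring, using the Vieta relations
\[
(t_+t_-)^2 = \frac{R^2(1-\varepsilon^2R^2)}{R^2-\varepsilon^2}, \qquad t_+^2+t_-^2 = \frac{2R^2(1-\varepsilon^2)}{R^2-\varepsilon^2}
\]
that fall out of the vertex quadratic $(R^2-\varepsilon^2)T^2-2R^2(1-\varepsilon^2)T+R^2(1-\varepsilon^2R^2)=0$ in $T=t^2$.

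The main obstacle is this polynomial identity: the two quartics are not visibly equal, and the match requires careful use of the Vieta relations above. An arguably cleaner but equivalent route is to pass to the hyperboloid model, where Definition~\ref{def:focdirhypconic} becomes the quadric $\langle x,a_1\rangle^2-\varepsilon^2\langle x,n\rangle^2+\langle x,x\rangle=0$ (with $n$ the unit spacelike pole of $\ell$) and, after analogous squaring, Definition~\ref{def:twofocushypconic} becomes $\langle x,a_1\rangle^2+\langle x,a_2\rangle^2-2\cosh c\,\langle x,a_1\rangle\langle x,a_2\rangle-\sinh^2 c\,\langle x,x\rangle=0$; the matching becomes a direct comparison of quadratic forms on $\bR^{2,1}$, with the axial symmetry reducing the unknowns to just $a_2$ and $c$.
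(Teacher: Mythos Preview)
Your proposal is correct and follows essentially the same route as the paper: both work in the upper half-plane with focus $i$ and directrix $|z|=R$, reduce Definition~\ref{def:focdirhypconic} to the same quartic (your form $R^2|z^2+1|^2=\varepsilon^2(|z|^2-R^2)^2$ is exactly the paper's equation~\eqref{eq:focdirell}), read off the closedness threshold $\varepsilon<1/R$, deduce the non-equivalence from non-compactness, and then match the two-focus parameters by equating the axial intercepts. The one substantive difference is completeness: the paper verifies the final polynomial identity only for the single numerical example $b=2$, $c=\log(5/2)$ and then asserts that ``other values \dots\ can be handled similarly,'' whereas your Vieta relations for $t_\pm^2$ (and the hyperboloid-model alternative) sketch the general verification that the paper omits.
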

\begin{proof}
We will work in the upper half-plane model of $\bH^2$ and, without
loss of generality, put one focus at $i$ and let the imaginary axis be
an axis of the ellipse.  For an ellipse with the ``two-focus
definition'' and foci at $i$ and $bi$, $b>0$, the equation is
\[
2\tanh^{-1}\left({\frac{|z-i|}{|z+i|}}\right) +
2\tanh^{-1}\left({\frac{|z-bi|}{|z+bi|}}\right) = c,
\]
which can be rewritten as the algebraic equation
\begin{multline}
\label{eq:2focell}
\left(x^2+y^2+1+\sqrt{(x^2 - y^2 + 1)^2 + 4 x^2 y^2}\right)\\
\times \left(x^2+y^2+b^2+\sqrt{(x^2 - y^2 + b^2)^2 + 4 x^2 y^2}\right)
=4be^c\,y^2
\end{multline}
with $c>0$.  Plots of this equation for $b= \frac34$ and for
various values of $c$ are shown in Figure \ref{fig:2focell}.
The minimal value of $c$ to have the foci inside the ellipse is
the hyperbolic distance between the foci, or $\vert\log b\vert$.
As $c$ increases, the curves get bigger and bigger and look more like
circles. Now that since \eqref{eq:2focell} implies that $d(z,i)\le c$,
any ellipse in the sense of Definition \ref{def:twofocushypconic} is
automatically compact (closed) in $\bH^2$.
\begin{figure}
\begin{center}
\includegraphics[width=3in]{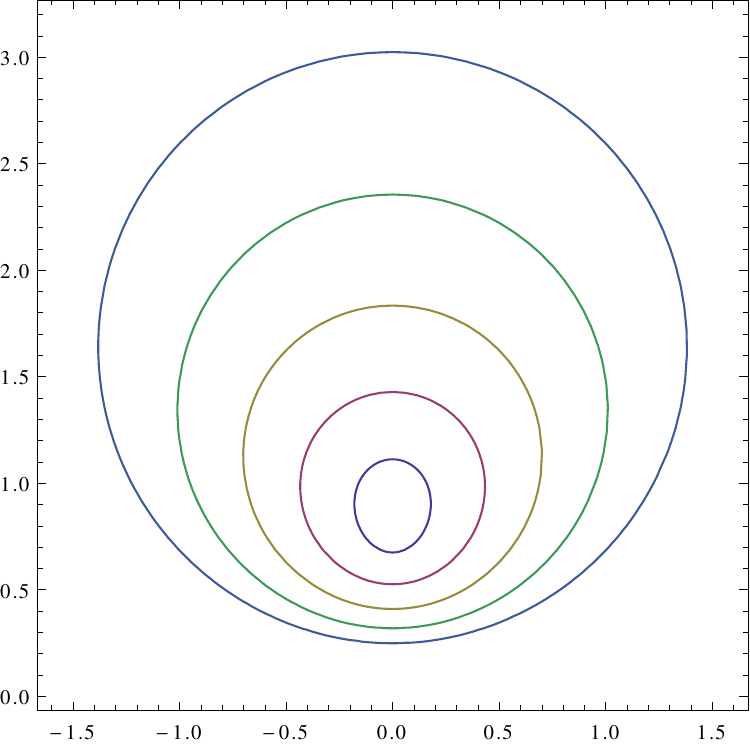}
\end{center}
\caption{Two-focus ellipses in the upper half-plane with foci at $i$
  and $\frac{3i}{4}$, as drawn with \texttt{Mathematica}} 
\label{fig:2focell}
\end{figure}

Now consider the focus-directrix definition for an ellipse in the
upper half-plane, with a focus at $i$ and directrix $|z|=r$, $r>1$ (this
choice makes the imaginary axis an axis of the ellipse).  The distance
from $z$ to the directrix is half the distance to the reflection of
$z$ across the directrix, which is $\frac{r^2}{\overline z}$.  Thus
the equation becomes
\[
\sinh\left(2\tanh^{-1}\left({\frac{|z-i|}{|z+i|}}\right)\right)
=\varepsilon\,\sinh \left(\tanh^{-1}
\left(\frac{\left\vert z-\frac{r^2}{\overline z}\right\vert}
  {\left\vert z-\frac{r^2}{z}\right\vert}\right)\right),
\]
which simplifies (after squaring both sides) to
\begin{multline}
\label{eq:focdirell}
r^2 \left(1 + x^4 + y^4 + 2 y^2 \left(-1 + \varepsilon^2\right) + 
    2 x^2 \left(1 + y^2 + \varepsilon^2\right)\right) \\=
    \varepsilon^2\,     \left(r^4 + \left(x^2 + 
      y^2\right)^2\right).
\end{multline}
This is a relatively simple quartic equation in $x$ and $y$, basically
the Cassini oval equation, and has
some interesting features.  For example, if one sets
$\varepsilon=\frac{1}{r}$, this reduces to a lemniscate passing
through $(0,0)$ (an ideal boundary point of $\bH^2$).  When
$\varepsilon > \frac{1}{r}$, the curve (viewed in $\bH^2$) is not
closed and approaches two distinct ideal boundary points.
Pictures of this behavior appear in Figure \ref{fig:focdirell}.
As a check that having two distinct ideal boundary points is not just
an artifact of the calculation,
one can check that upon substituting $r=3$ and $\varepsilon =
\frac12$ into \eqref{eq:focdirell}, one gets two points with $y=0$,
namely $x=\pm \sqrt{\frac37}$.

\begin{figure}
\begin{center}
  \includegraphics[width=2in]{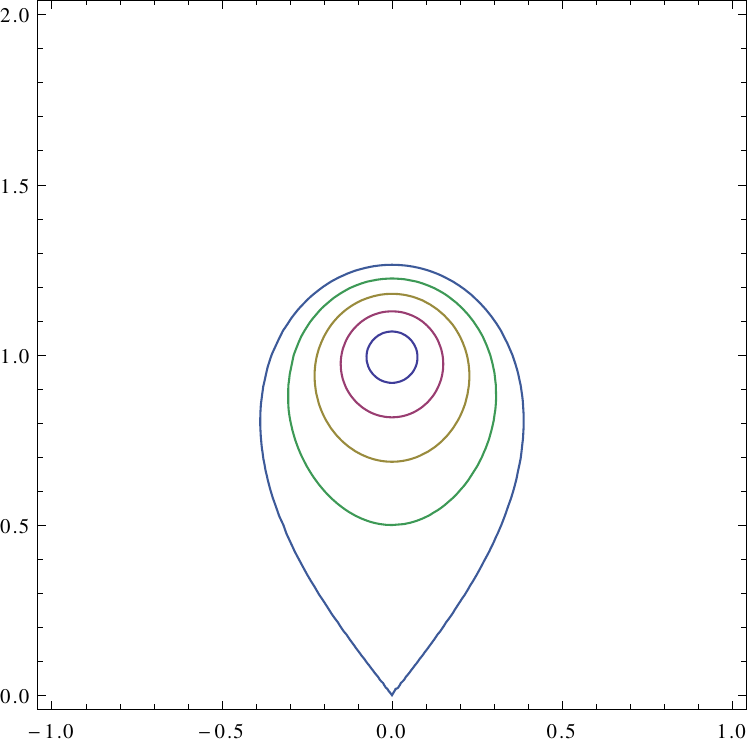}\qquad\qquad
  \includegraphics[width=2in]{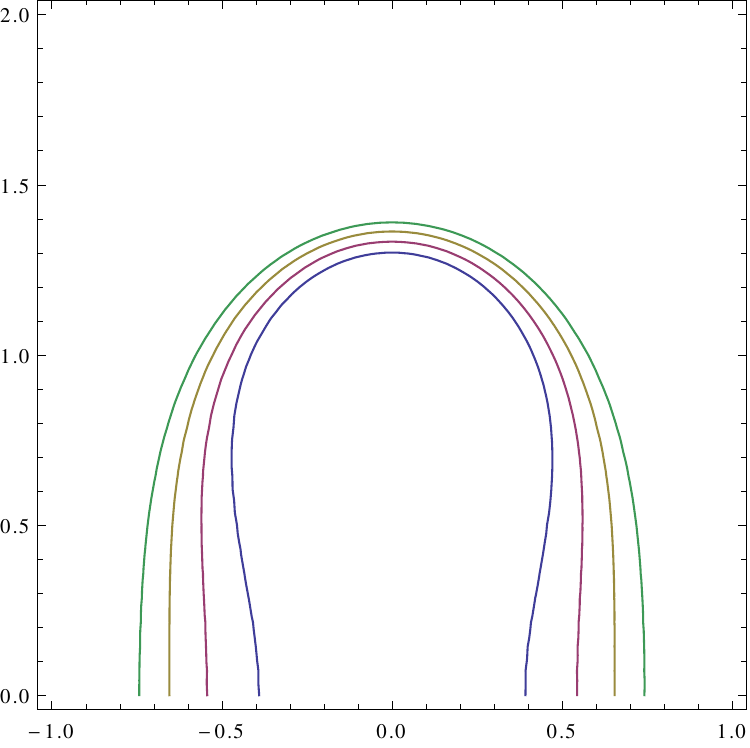}
\end{center}
\caption{Focus/directrix ellipses in the upper half-plane with focus at $i$
  and directrix $|z|=2$, as drawn with \texttt{Mathematica}.  On the
  left, cases with $\varepsilon \le 
  .5$. The case $\varepsilon =.5$ is a lemniscate.  On the right,
  cases with $\varepsilon$ from $.6$ to $.9$.}
\label{fig:focdirell}
\end{figure}

To illustrate another difference between the two definitions, consider the
case of the two-focus definition
when the foci coincide, i.e., $b=1$ in equation
\eqref{eq:2focell}.  Then equation \eqref{eq:2focell} reduces to
\[
x^2+y^2+1+\sqrt{(x^2 - y^2 + 1)^2 + 4 x^2 y^2}= 2e^{c/2}y
\]
or
\[
(x^2 - y^2 + 1)^2 + 4 x^2 y^2 - (2e^{c/2}y-x^2-y^2-1)^2 = 0,
\]
which simplifies to the equation of a circle:
\begin{equation}
x^2+\left(y-\cosh(c/2)\right)^2 = \sinh^2(c/2).
\label{eq:2foccirc}
\end{equation}
However, the focus/directrix equation \eqref{eq:focdirell} never
reduces to a circle.

However, perhaps rather surprisingly, focus/directrix ellipses with
$\varepsilon < \frac{1}{r}$ (this is the case where the curve is closed)
turn out to be special cases of two-focus ellipses.
A rather horrendous calculation
with \texttt{Mathematica} or \texttt{MuPAD}
shows for example that \eqref{eq:2focell}
with $b=2$ and $c=\log\left(\frac52\right)$ is equivalent to
\eqref{eq:focdirell} with
\[
\varepsilon=\frac{\sqrt{209}}{21}, \quad r=\sqrt{\frac{11}{19}}.
\]
To see this, rewrite \eqref{eq:focdirell}  in the form
\[
\begin{aligned}
x^2+y^2+1&+\sqrt{(x^2 - y^2 + 1)^2 + 4 x^2 y^2}\\ &=
\frac{20y^2}{x^2+y^2+4+\sqrt{(x^2 - y^2 + 4)^2 + 4 x^2 y^2}}\\
&=\frac{20y^2\left(x^2+y^2+4-\sqrt{(x^2 - y^2 + 4)^2 + 4 x^2 y^2}\right)}
{(x^2+y^2+4)^2-((x^2 - y^2 + 4)^2 + 4 x^2 y^2)},
\end{aligned}
\]
simplify, and rewrite in the form $E + \sqrt{B} = F\sqrt{D}$,
where $B=(x^2 - y^2 + 1)^2 + 4 x^2 y^2$ and $D=(x^2 - y^2 + 4)^2 + 4 x^2 y^2$.
Square both sides, again simplify and regroup to get the
term with $\sqrt{B}$ by itself, and finally square again.
After factoring out $y^2$, one finally ends up with the equation
\[
20x^4 + 40x^2y^2 + 325x^2 + 20y^4 - 116y^2 + 80 = 0,
\]
which is equivalent to \eqref{eq:focdirell} for the given parameters.
Other values of $r$ and $\varepsilon$
(with $r\varepsilon < 1$) can be handled similarly; one just
needs to solve for the values of $b$ and $c$ giving the same $y$-intercepts.
\end{proof}

\subsection{Parabolas}
\label{sec:parabola}
Next, we consider the case of the parabola. Here the result is rather simple:
\begin{theorem}
\label{thm:parabolas}
The definitions of parabolas in Definition \ref{def:focdirhypconic}
and  in Definition \ref{def:twofocushypconic}
never agree.  In all cases, however, a parabola in $\bH^2$ is not
closed.
\end{theorem}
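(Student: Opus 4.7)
The plan is to bring each type of parabola to a standard position in the upper half-plane model, derive its equation, and then distinguish them through their closures in the ideal boundary $\partial_\infty\bH^2=\bR\cup\{\infty\}$. Using the transitivity of the isometry group of $\bH^2$ on pairs (point, tangent direction), in either definition I can take the focus to be $a_1=i$ with the axis of symmetry along the imaginary axis. Under this normalization, for the two-focus definition $a_2=it$ with $t\to\infty$, while for the focus-directrix definition the directrix becomes a semicircular geodesic $\ell_R=\{z:|z|=R\}$ perpendicular to the imaginary axis, with $R>0$, $R\ne 1$.

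For the two-focus limit I start from $d(z,it)=\operatorname{arccosh}\!\left(\frac{x^2+y^2+t^2}{2yt}\right)$ and use $\operatorname{arccosh}(u)=\log(2u)+O(1/u^2)$ as $u\to\infty$ to establish the Busemann identity
\[
\lim_{t\to\infty}\bigl(d(z,it)-\log t\bigr)=-\log y.
\]
Setting $c_t=\log t+k$ with a fixed $k>0$ (so that $c_t-d(i,it)=k$), the ellipse condition $d(z,i)+d(z,it)=c_t$ degenerates in the limit to $d(z,i)=k+\log y$. Applying $\cosh$ to both sides and using $\cosh d(z,i)=(x^2+y^2+1)/(2y)$ reduces this to
\[
x^2=(e^k-1)\bigl(y^2-e^{-k}\bigr),\qquad y\ge e^{-k/2}.
\]
For the focus-directrix parabola, $\varepsilon=1$ collapses $\sinh d(z,i)=\sinh d(z,\ell_R)$ to $d(z,i)=d(z,\ell_R)$; computing the distance to $\ell_R$ via the reflection $z\mapsto R^2/\bar z$ gives $\cosh^2 d(z,\ell_R)=1+(|z|^2-R^2)^2/(4R^2y^2)$, and equating with $\cosh^2 d(z,i)$ yields (after clearing denominators) the quartic
\[
(R^2-1)\bigl(|z|^4-R^2\bigr)=-4R^2 x^2.
\]

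The final step is to compare how these curves meet the ideal boundary. The two-focus parabola satisfies $y\ge e^{-k/2}>0$, so it stays uniformly bounded away from the real axis; meanwhile as $y\to\infty$ one finds $|x|/y\to\sqrt{e^k-1}$, so both branches converge to the single ideal point $\infty$. For the focus-directrix parabola, a direct check of the quartic (immediate when $R>1$ since then $|z|^4\le R^2$, and an easy quadratic bound when $R<1$) forces $|z|$ to lie in a bounded region, so $\infty$ is not a limit point; setting $y=0$ gives a quadratic in $x^2$ with a unique positive root $x_0^2$, so the curve approaches the real axis at the two distinct ideal points $\pm x_0$. Hence any two-focus parabola has exactly one ideal limit and any focus-directrix parabola has exactly two, and the two families cannot coincide for any parameters. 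Since in either case the curve meets $\partial_\infty\bH^2$, it is unbounded in $\bH^2$ and hence not closed. The only nontrivial ingredient is the Busemann limit $d(z,it)-\log t\to-\log y$; the rest is routine calculation.
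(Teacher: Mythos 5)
Your proof is correct, and while it follows the same broad strategy as the paper (normalize in the upper half-plane with focus at $i$ and axis the imaginary axis, derive explicit Cartesian equations for both curves, and distinguish them by their behavior at the ideal boundary), the execution differs in two worthwhile ways. First, you send the second focus to the ideal point $\infty$ and pass to the limit via the Busemann-type identity $d(z,it)-\log t\to-\log y$, obtaining the clean equation $x^2=(e^k-1)(y^2-e^{-k})$; the paper instead sends the second focus to the ideal point $0$ (taking $b\to0_+$ in the ellipse equation \eqref{eq:2focell} with $be^c$ fixed) and gets the lemniscate \eqref{eq:2fopara} through the origin. These are equivalent under $z\mapsto-1/z$ (with $C=2e^k$), but your route avoids manipulating nested radicals. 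Second, and more substantively, your distinguishing criterion is the \emph{number} of ideal endpoints --- one for every two-focus parabola versus two for every focus/directrix parabola --- which is an isometry invariant computed separately for each family; the paper instead compares the two normalized equations \eqref{eq:2fopara} and \eqref{eq:fodirpar} and observes that only the former passes through the origin, which implicitly assumes both curves have been put in standard position simultaneously. Your invariant makes the ``never agree'' conclusion manifestly independent of any choice of normalization, so it is slightly more robust. The only point you gloss over is that the real points $(\pm x_0,0)$ of the quartic $(R^2-1)(|z|^4-R^2)=-4R^2x^2$ are genuine limits of points of the curve with $y>0$ (not isolated real points of the variety); this follows from a one-line gradient computation or from solving the quadratic in $|z|^2$ for small $y$, and is routine.
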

\begin{proof}
Without loss of generality, we can again use the Poincar\'e upper
half-plane model of $\bH^2$ and put one focus at $i$ and take
the axis of the parabola to be the imaginary axis.  The two-focus 
definition of  Definition \ref{def:twofocushypconic} is the limiting 
case of \eqref{eq:2focell} as we keep $be^c = C/2$ fixed and 
let $b\to 0_+$.  (This is because $d(i,ib)=\vert\log b\vert = - \log b$ 
for $0<b<1$ and we want $c-d(i,ib)= c + \log b$ to be held constant.)  
Then \eqref{eq:2focell} reduces to 
\begin{equation}
\label{eq:2fopar}
\left(x^2+y^2+1+\sqrt{(x^2 - y^2 + 1)^2 + 4 x^2 y^2}\right)
\left(x^2+y^2\right) = C\,y^2,
\end{equation}
or equivalently (after regrouping and squaring to get rid of the radical, 
then factoring out a $y^2$)
\begin{equation}
\label{eq:2fopara}
2(C - 2)(x^2 + y^2)^2  + 2C(x^2 + y^2) - C^2 y^2 =0 .
\end{equation}
This is the equation of a lemniscate through the origin.  (Remember 
that $0$, however, is only an ideal boundary point of $\bH^2$.)  
Definition \ref{def:focdirhypconic} simply gives \eqref{eq:focdirell} 
with $\varepsilon = 1$, which reduces to
\begin{equation}
\label{eq:fodirpar}
1 - r^2 + 4 x^2 + \left(1 - \tfrac{1}{r^2}\right) (x^2 + y^2)^2 = 0  ,
\end{equation}
which is a Cassini oval equation.  Note that \eqref{eq:2fopara} and 
\eqref{eq:fodirpar} never agree, since for $r\ne 1$ (we don't want 
the directrix of the parabola to pass through the focus), the curve 
given by \eqref{eq:fodirpar} doesn't pass through the origin. Pictures 
of the various kinds of parabolas, plotted by \texttt{Mathematica}, 
are shown in Figures \ref{fig:parabolas1} and \ref{fig:parabolas2}.
\end{proof}
\begin{figure}[hbt]
\begin{center}
  \includegraphics[width=2in]{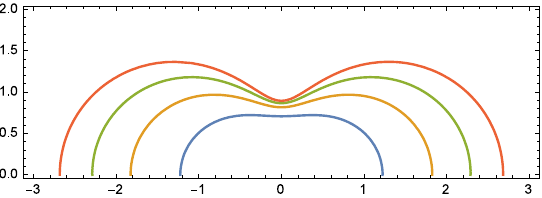}\qquad\qquad
  \includegraphics[width=2in]{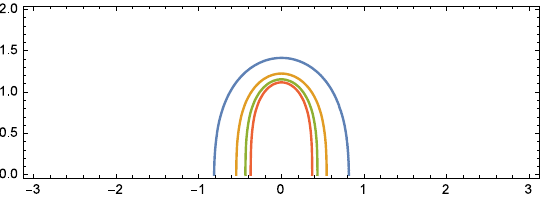}
\end{center}
\caption{Focus/directrix parabolas in the upper half-plane with focus at $i$
and directrix $|z|=r$, as drawn with \texttt{Mathematica}.  On the
left, cases with $r < 1$. These are Cassini ovals.  On the right,
cases with $r>1$. Of course, if one were wearing ``hyperbolic glasses,''
all would look roughly the same.}
\label{fig:parabolas1}
\end{figure}
\begin{figure}[hbt]
\begin{center}
\includegraphics[width=2in]{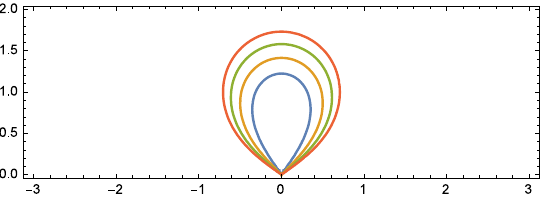}
\end{center}
\caption{Two-focus parabolas in the upper half-plane with focus at $i$,
  as drawn with \texttt{Mathematica}.  Note the lemniscate shape.}
\label{fig:parabolas2}
\end{figure}

\subsection{Hyperbolas}
\label{sec:hyperbola}
Finally, we consider the case of the hyperbola.
\begin{theorem}
\label{thm:twohyps}
The definition of hyperbola in Definition \ref{def:focdirhypconic} does
\emph{not always} agree with the definition of hyperbola in Definition
\ref{def:twofocushypconic}.  However, the two-focus hyperbola
from Definition \ref{def:twofocushypconic} is a special case of
the focus-directrix hyperbola of Definition \ref{def:focdirhypconic}.
\end{theorem}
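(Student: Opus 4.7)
The plan is to mirror the proof of Theorem~\ref{thm:twoellipses}. Working in the upper half-plane model of $\bH^2$, without loss of generality I place one focus at $i$ and take the imaginary axis as the axis of symmetry. For the two-focus hyperbola I put the second focus at $bi$ with $b>0$, $b\ne 1$, and $0<c<|\log b|$. The defining equation
\[
\left\vert 2\tanh^{-1}\!\frac{|z-i|}{|z+i|} - 2\tanh^{-1}\!\frac{|z-bi|}{|z+bi|}\right\vert = c
\]
can be handled exactly as the analogue in the ellipse case: exponentiate (since $2\tanh^{-1} X=\log\tfrac{1+X}{1-X}$), clear the resulting fractions, and square twice to remove the two radicals. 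This yields a quartic algebraic equation structurally parallel to \eqref{eq:2focell}, but with the sum of logarithms replaced by a difference, so that the product of the two ``ellipse'' factors becomes a quotient. The focus-directrix equation with $\varepsilon>1$ and directrix $|z|=r$ is still \eqref{eq:focdirell}, unchanged in form from the ellipse calculation.

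Next I would show that every two-focus hyperbola is a focus-directrix hyperbola. Following the template at the end of the proof of Theorem~\ref{thm:twoellipses}, compute the intersections of each curve with the imaginary axis. At $z=iy$ one has $d(iy,i)=|\log y|$ and $d(iy,bi)=|\log(y/b)|$, so the two-focus hyperbola meets the imaginary axis at the solutions of $\bigl|\,|\log y|-|\log(y/b)|\,\bigr|=c$, giving up to four intercepts determined explicitly by $(b,c)$. For the focus-directrix hyperbola at $z=iy$, \eqref{eq:focdirell} reduces to a biquadratic in $y$ whose roots are determined by $(r,\varepsilon)$. Matching the four axis intercepts gives a system to solve for $(r,\varepsilon)$ in terms of $(b,c)$; one then verifies, either by a direct symbolic computation analogous to the one sketched after equation \eqref{eq:focdirell} in the ellipse proof, or by appealing to the rigidity of the common quartic form (both are $\{z\mapsto -\overline z\}$-invariant quartics in a two-parameter family, essentially pinned down by the four real roots on the imaginary axis), that the full curves agree.

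To show the opposite containment fails I would exhibit a focus-directrix hyperbola that is not realized by any two-focus hyperbola. The natural obstruction is the asymptotic/ideal-boundary behavior of \eqref{eq:focdirell}: the sign of $1-\varepsilon^2/r^2$ (the coefficient of $(x^2+y^2)^2$ after expansion) and of $1-\varepsilon^2$ determine how many points the curve has on the real axis, i.e.\ how many ideal endpoints it accumulates at. A two-focus hyperbola, by contrast, always has exactly four ideal endpoints (two on each of its two branches). For $\varepsilon>1$ suitably chosen relative to $r$, the right-hand side of \eqref{eq:focdirell} will produce a curve with a number of ideal limits different from four (illustrated by plots analogous to Figure~\ref{fig:focdirell}), which proves the noncoincidence.

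The main obstacle is the verification step in the second paragraph: matching four axis intercepts alone leaves residual coefficients in the two quartics unmatched in principle. The cleanest resolution is to carry out the explicit \texttt{Mathematica} or \texttt{MuPAD} comparison as was done in the ellipse case, producing an identity between the expanded two-focus quartic for $(b,c)$ and the expanded focus-directrix quartic for the corresponding $(r,\varepsilon)$, and then checking that the correspondence $(b,c)\mapsto(r,\varepsilon)$ is well-defined for all admissible parameters. A parameter-counting argument (both families are two-dimensional modulo isometry, and the map is real-analytic and nontrivial on a sample point) then promotes the pointwise identity into the global inclusion of one family into the other.
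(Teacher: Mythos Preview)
Your approach is essentially the paper's: normalize to the upper half-plane with a focus at $i$, match axis intercepts to solve for $(r,\varepsilon)$ in terms of $(b,c)$, verify by symbolic computation on a sample, and then exhibit a focus-directrix hyperbola that cannot arise from two foci. Two corrections and one comparison are worth noting.

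First, the two-focus hyperbola has exactly \emph{two} $y$-intercepts, not ``up to four'': with $1<e^c<b$, the equation $\bigl|\,|\log y|-|\log(y/b)|\,\bigr|=c$ has solutions only for $1<y<b$, where it reduces to $y^2=be^{\pm c}$. Matching these two values against the two $y$-intercepts of \eqref{eq:focdirell} is already a $2\times 2$ system, and the paper solves it explicitly, giving closed formulas for $r$ and $\varepsilon$ in terms of $b$ and $c$ (and checks $\varepsilon>1$). Your parameter-counting/real-analyticity argument to promote a single verified instance to the whole family is not quite a proof; the paper likewise verifies one case in full and then asserts the rest ``can be handled similarly,'' so neither version is fully airtight, but having the explicit map $(b,c)\mapsto(r,\varepsilon)$ makes the paper's claim more concrete.

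Second, your obstruction for the reverse inclusion (counting ideal endpoints on $y=0$) is different from the paper's but works. The paper instead takes $r=\varepsilon>1$, where \eqref{eq:focdirell} collapses to the Cartesian ellipse $(r^2+1)x^2+(r^2-1)y^2=(r^4-1)/2$, which has only \emph{one} vertex on the positive imaginary axis, hence cannot be a two-focus hyperbola. Your version would observe that this same curve meets $y=0$ in only two points, whereas a genuine two-focus hyperbola meets $y=0$ in four; either invariant distinguishes the curves. The paper's choice has the virtue that the degenerate case is completely explicit and requires no further case analysis of the biquadratic on $y=0$.
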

\begin{proof}
Consider the two-focus hyperbola. Fix $c>0$.
(When $c=0$, the definition degenerates to the bisector of the line
segment joining the two foci, which is a straight line (i.e., a geodesic).)
We will work in the upper half-plane model of $\bH^2$ and, without
loss of generality, put one focus at $i$ and the other focus at $ib$, $b>1$.
The equation of the two-focus hyperbola is then
\[
2\tanh^{-1}\left({\frac{|z-i|}{|z+i|}}\right) -
2\tanh^{-1}\left({\frac{|z-bi|}{|z+bi|}}\right) = \pm c,
\]
which can be rewritten as the algebraic equation
\begin{multline}
  \label{eq:2fochyp}
b^2 + x^2 + y^2 + \sqrt{b^4 + 2 b^2 (x^2 - y^2) + (x^2 + y^2)^2} =\\
b e^{\pm c} \left(1 + x^2 + y^2 + \sqrt{1 + 2 (x^2 - y^2)
  + (x^2 + y^2)^2}\right)  
\end{multline}
with $c>0$.  Note that the hyperbola should intersect its axis
(here the imaginary axis) at two points of the form $iy$, $1< y < b$,
so we want $0 < c < \log b$, and the two $y$-intercepts are at
$i\sqrt{be^{\pm c}}$.  Comparing this with the $y$-intercepts for the
focus-directrix hyperbola \eqref{eq:focdirell} (the equation is the
same as for the ellipse --- the only difference is the value of the
eccentricity $\varepsilon$), we see that this agrees with a
focus-directrix hyperbola with parameters satisfying
\[
\frac{\sqrt{r + r^2 \varepsilon}}{\sqrt{r + \varepsilon}} =
\frac{\sqrt{b}}{e^{c/2}}, \quad \frac{\sqrt{r - r^2\varepsilon}}
     {\sqrt{r - \varepsilon}} = \sqrt{b}\,e^{c/2},
     \]
or
\begin{equation}
r =\sqrt{\frac{-b + 2 b^2 e^c - b e^{2 c}}
  {b - 2 e^c + b e^{2 c}}},\quad  \varepsilon =
\frac{\sqrt{b (-1 + 2 b e^c - e^{2 c})(b - 2 e^c + b e^{2 c})}}
     {b(e^{2c}-1)}.
\label{eq:revals}
\end{equation}
Note that since $c < \log b$, the value of $\varepsilon$ is $>1$.
Just as an example, if $b=2$ and $c = \log(3/2)$, after removing some
superfluous factors, equation \eqref{eq:2fochyp} reduces to
$24 + 6 x^4 - 26 y^2 + 6 y^4 + 3 x^2 (-17 + 4 y^2) = 0$, which
agrees with the focus-directrix hyperbola with focus $i$, directrix
$|z|=\sqrt{\frac{11}{7}}$, and eccentricity $\varepsilon=\frac{\sqrt{77}}{5}$.
A graph of this hyperbola, drawn with \texttt{Mathematica}, appears in Figure
\ref{fig:hyp}.

\begin{figure}[hbt]
\begin{center}
\includegraphics[width=3in]{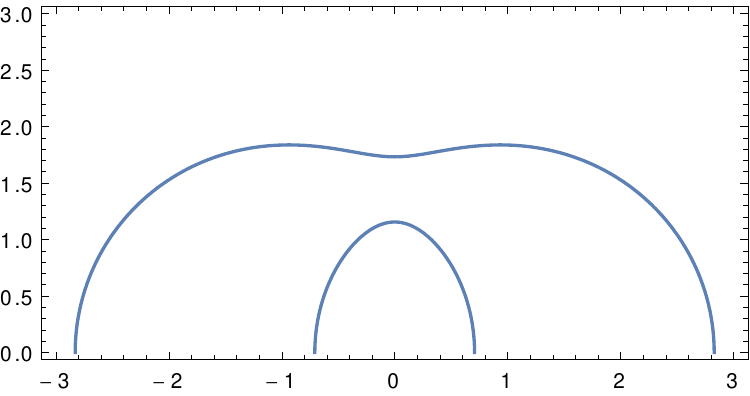}
\end{center}
\caption{A hyperbola in the upper half-plane with foci at $i$ and $2i$,
  $c=\log(3/2)$, as drawn with \texttt{Mathematica}.}
\label{fig:hyp}
\end{figure}

So this analysis shows that every two-focus hyperbola is also a
focus-directrix hyperbola.  The converse fails, however. Indeed,
one can see from \eqref{eq:focdirell} that the focus-directrix hyperbola
with $r=\varepsilon>1$ degenerates to the equation
\[
(r^2 + 1) x^2 + (r^2 - 1) y^2  = \frac{r^4 - 1}{2},
\]
which, surprisingly, is an \emph{ellipse} in Cartesian coordinates.
This has only one $y$-intercept in the upper half-plane, at
the point $i\sqrt{\frac{r^2 + 1}{2}}$.  So this ``hyperbola'' has only
one vertex, the other vertex having gone to $+\infty i$, and this
cannot be written as a two-focus hyperbola.
\end{proof}

\bibliographystyle{amsplain}
\bibliography{conics}
\end{document}